\numberwithin{equation}{section}
\newcommand{\eq}[1]{\begin{align*} #1 \end{align*}}
\newcommand{\eeq}[1]{\begin{align} \begin{split} #1 \end{split} \end{align}}
\newcommand{\R}{\mathbb{R}}
\newcommand{\EE}{\mathbf{E}}
\renewcommand{\c}{\mathcal{C}}
\newcommand{\n}{\mathcal{N}}
\newtheorem{thm}{Theorem}[section]
\newtheorem{cor}[thm]{Corollary}
\newtheorem{lemma}[thm]{Lemma}
\theoremstyle{definition}
\newtheorem{remark}[thm]{Remark}
\def\eps{\varepsilon}
\DeclareMathOperator{\sech}{sech}
\DeclareMathOperator{\e}{e}
\renewcommand{\thefootnote}{\fnsymbol{footnote}}
\title[Symmetry breaking in multi-species SK model]{Replica symmetry breaking in multi-species Sherrington--Kirkpatrick model}
\subjclass[2010]{60K35, 
82B26, 
82B44
}
\keywords{Spin glasses, Sherrington--Kirkpatrick model, de Almeida--Thouless line}
\author{Erik Bates}
\author{Leila Sloman}
\thanks{E.B. was partially supported by NSF grant DGE-114747}
\thanks{L.S. was partially supported by NSF grant DGE-1656518}
\address{\newline Department of Mathematics \newline Stanford University \newline 450 Serra Mall, Bldg 380 \newline Stanford, CA 94305 \newline \textup{\tt ewbates@stanford.edu} \newline \textup{\tt lsloman@stanford.edu}}
\author{Youngtak Sohn}
\address{\newline Department of Statistics \newline Stanford University\newline Sequoia Hall, 390 Serra Mall \newline Stanford, CA 94305\newline \textup{\tt youngtak@stanford.edu}}
\begin{document}
\bibliographystyle{acm}

\renewcommand{\thefootnote}{\arabic{footnote}} \setcounter{footnote}{0}

\begin{abstract}
In the Sherrington--Kirkpatrick (SK) and related mixed $p$-spin models, there is interest in understanding replica symmetry breaking at low temperatures. 
For this reason, the so-called AT line proposed by de Almeida and Thouless as a sufficient (and conjecturally necessary) condition for symmetry breaking, has been a frequent object of study in spin glass theory.
In this paper, we consider the analogous condition for the multi-species SK model, which concerns the eigenvectors of a Hessian matrix. 
The analysis is tractable in the two-species case with positive definite variance structure, for which we derive an explicit AT temperature threshold.
To our knowledge, this is the first non-asymptotic symmetry breaking condition produced for a multi-species spin glass.
As possible evidence that the condition is sharp, we draw further parallel with the classical SK model and show coincidence with a separate temperature inequality guaranteeing uniqueness of the replica symmetric critical point.
\end{abstract}

\maketitle

\section{Introduction}
Spin glass theory, originally developed to study disordered magnets \cite{edwards-anderson75}, now includes applications in biology \cite{parisi90,barra-agliari10,agliari-barra-guerra-moauro11}, computer science \cite{nishimori01,mezard-montanari09}, machine learning \cite{hopfield82,amit-gutfreund-sompolinsky85,bovier-picco98,barra-guerra08,barra-genovese-guerra10,agliari-barra-galluzzi-guerra-moauro12,barra-genovese-guerra-tantari12} and econometrics/quantitative sociology \cite{krapivsky-redner03,contucci-ghirlanda07,contucci-gallo-menconi08,barra-contucci10,barra-agliari12}, primarily due to interest in large-scale networks.
However, its prototypical mathematical model, namely that of Sherrington and Kirkpatrick (SK) \cite{sherrington-kirkpatrick75}, is fully mean-field and thus fails to capture the effect of global inhomogeneities and communities.
In order to examine models more faithful to real-world networks, physicists and mathematicians have in recent years advanced the study of bipartite or more general ``multi-species" spin systems, e.g.~\cite{gallo-contucci08,barra-genovese-guerra11,fedele-contucci11,fedele-unguendoli12,barra-galluzzi-guerra-pizzoferrato-tantari14,barra-contucci-mingione-tantari15}.
An ongoing task is to adapt results from classical spin glasses, such as SK, to their multi-species extensions, especially those regarding the so-called glassy phase observed at low temperatures.


This paper focuses on the multi-species SK (MSK) model, which allows arbitrary interactions between sets of binary spins we call ``species" but remains mean-field in the sense that the number of species is fixed even as the population of each species grows to infinity.
This spin system was introduced by Barra, Contucci, Mingione, and Tantari \cite{barra-contucci-mingione-tantari15}, who also proposed a Parisi formula for the limiting free energy when the interaction parameters satisfy a convexity condition.
This formula was proved by Panchenko \cite{panchenko15I}, which allows us to proceed rigorously in the present work.
By entropic considerations, it is known that there does indeed exist a low temperature phase, i.e.~where the disorder is said to be ``symmetry breaking" \cite[Proposition 4.2]{barra-contucci-mingione-tantari15}.
Our main purpose is to prove a quantitative version of this fact.
We are able to do so in the two-species model, for which we find the analogue of the de Almeida--Thouless (AT) line \cite{almeida-thouless78} from the classical SK model.
This is the content of Theorem \ref{2speciessymmetrybreaking}.
For three or more species, our calculations still predict an AT condition given in Corollary \ref{general_RSB_condition}; however, we have been unable to translate this condition into an explicit temperature threshold outside the two-species case.
Section \ref{3plus_difficulties} outlines the relevant difficulties.

The models under consideration are defined below in Section \ref{definitions}.
Our main results for the two-species SK model, namely Theorems \ref{uniquenessof2speciesrRSsol} and \ref{2speciessymmetrybreaking}, are stated in Section~\ref{main_results}.
Their proofs are given in Sections \ref{sec:uniqueness} and \ref{sec:hessian}, respectively, and related results from the literature on single-species models are discussed in Section \ref{subsec:related_results}.

\subsection{The SK and MSK models} \label{definitions}
We consider a collection of Ising spins $\sigma = (\sigma_1,\dots,\sigma_N) \in \{\pm 1\}^N$, subject to the Hamiltonian
\eeq{ \label{hamiltonian}
H_N(\sigma) = \frac{\beta}{\sqrt{N}}\sum_{i,j=1}^N g_{ij}\sigma_i\sigma_j + h\sum_{i=1}^N \sigma_i,
}
where $\beta > 0$ is the inverse temperature, $h \geq 0$ is the external field, and the disorder parameters $g_{ij}$ are independent, centered Gaussian random variables.
In the SK model, these parameters all have unit variance.
In the MSK model, their variances depend on $i$ and $j$ in the following way.
The spins are partitioned into $M \geq 2$ sets as $\{1,\dots,N\} = \bigcup_{s=1}^M I_s$, and then
\eq{
\EE(g_{ij}^2) = \Delta_{st}^2 \quad \text{whenever $i \in I_s$ and $j \in I_t$}.
}
Thus $\Delta^2 = (\Delta_{st}^2)_{1\leq s,t\leq M}$ is a symmetric $M\times M$ matrix.
Considering the infinite volume limit, we assume that
\eq{
\lim_{N\to\infty} \frac{|I_s|}{N} = \lambda_s \in (0,1) \quad \text{for each $s = 1,\dots,M$.}
}
Of central interest is the free energy of the system,
\eq{
F_N = \frac{\EE \log Z_N}{N}, \quad \text{where} \quad Z_N = \sum_{\sigma \in \{\pm1\}^N} \e^{\beta H_N(\sigma)}.
}
Under the assumption that $\Delta^2$ is nonnegative definite, Barra \textit{et al.}~\cite[Theorem~1.2]{barra-contucci-mingione-tantari15} prove that $F_N$ converges as $N\to\infty$, and Panchenko \cite[Theorem 1]{panchenko15I} verifies their prediction that the limit is given by the variational formula
\eeq{ \label{variational_pre}
\lim_{N\to\infty} F_N = \inf \mathscr{P},
}
where $\mathscr{P}$ generalizes the famous Parisi formula \cite{parisi79,parisi80} proved by Talagrand \cite{talagrand06} for the SK model.
In fact, Panchenko shows $\lim_{N\to\infty} F_N \geq \inf \mathscr{P}$ for general $\Delta^2$.
It is the upper bound $F_N \leq \inf \mathscr{P}$, proved in \cite[Theorem 1.3]{barra-contucci-mingione-tantari15} using Guerra's interpolation method \cite{guerra03}, that requires the nonnegative definiteness assumption.

Let us now define $\mathscr{P}$ precisely.
Given an integer $k \geq 0$, consider a sequence
\begin{subequations} \label{parameters}
\begin{align}
0 = \zeta_0 < \zeta_1 < \cdots < \zeta_{k} < \zeta_{k+1} = 1, \label{parameters_zeta}
\intertext{and for each species $s = 1,\dots,M$, a corresponding sequence}
0 = q_0^s \leq q_1^s \leq \cdots \leq q_{k+1}^s \leq q_{k+2}^s = 1. \label{parameters_q}
\end{align}
\end{subequations}
With these parameters, for each $0 \leq \ell \leq k+2$ we define
\eeq{ \label{Qdef}
Q_{\ell} \coloneqq \sum_{s,t = 1}^M \Delta_{st}^2 \lambda_s \lambda_t q_{\ell}^s q_{\ell}^t, \qquad Q_{\ell}^s \coloneqq 2 \sum_{t =1}^M \Delta_{st}^2 \lambda_t q_{\ell}^t, \quad 1 \leq s \leq M,
}
and then
\eq{
X_{k+2}^s \coloneqq \log \cosh\bigg(h+\beta\sum_{\ell=0}^{k+1} \eta_{\ell+1}\sqrt{Q_{\ell+1}^s-Q_{\ell}^s}\bigg).
}
where $\eta_1,\dots,\eta_{k+2}$ are i.i.d.~standard normal random variables.
Next we inductively define
\eeq{ \label{X_def}
X_\ell^s \coloneqq \frac{1}{\zeta_\ell} \log \EE_{\ell+1} \exp(\zeta_\ell X_{\ell+1}^s), \qquad 0 \leq \ell \leq k+1,
}
where $\EE_{\ell+1}$ denotes expectation with respect to $\eta_{\ell+1}$. 
When $\ell=0$, \eqref{X_def} is understood to mean
\eq{
X_0^s = \lim_{\zeta\searrow0}\frac{1}{\zeta}\log \EE_{1}\exp(\zeta X_1^s) = \EE_1(X_1^s).
}
Finally, we can write
\eeq{ \label{parisi_expression}
\mathscr{P}(\zeta,q) \coloneqq \log{2} + \sum_{s = 1}^M \lambda_s X_0^s - \frac{\beta^2}{2} \sum_{\ell=1}^{k+1} \zeta_{\ell}(Q_{\ell+1} - Q_{\ell}),
}
so that \eqref{variational_pre} reads as
\eeq{ \label{variational}
\lim_{N\to\infty} F_N = \inf_{\zeta,q} \mathscr{P}(\zeta,q).
}

The physical interpretation is as follows. 
Each species has an order parameter $\mu_s$, which is the limiting distribution of the overlap
\eeq{ \label{overlap_def}
R_s(\sigma^1,\sigma^2) = \frac{1}{|I_s|}\Big|\sum_{i\in I_s} \sigma_i^1\sigma_i^2\Big|,
}
where $\sigma^1,\sigma^2$ are independent samples from the Gibbs measure associated to \eqref{hamiltonian}.
If the infimum in \eqref{variational} is achieved at $(\zeta,q)$, then $\mu_s = \sum_{\ell=1}^{k+1} (\zeta_{\ell}-\zeta_{\ell-1})\delta_{q_\ell^s}$. 
When $k=0$ and $\mu_s$ is a single atom for every $s$, we will say the system is in the ``replica symmetric" (RS) phase.
Otherwise, we will say the system has ``replica symmetry breaking" (RSB).
In the SK model (i.e.~$M=1$), there is a single order parameter $\mu$, and so it makes sense to discuss the \textit{level} of symmetry breaking.
That is, if $\mu$ consists of $k+1$ distinct atoms, then the model is said to exhibit ``$k$-step replica symmetry breaking" ($k$RSB); alternatively, if $\mu$ has infinite support---so the infimum in \eqref{variational} is not achieved---then there is ``full replica symmetry breaking" (FRSB).
In the MSK model, it may be the case that if $s\neq t$, then $\mu_s$ and $\mu_t$ can have a different number of atoms in their support.
That is, it is possible that for some $\ell$, one has $q^s_{\ell-1} < q^s_{\ell}$ but $q^t_{\ell-1} = q^t_{\ell}$.
Part of what is shown in \cite{panchenko15I}, however, is that $\zeta_\ell-\zeta_{\ell-1}$ is fixed across species.
It is thus reasonable to say that the MSK model exhibits $k$RSB if \eqref{variational} has a minimizer of the form \eqref{parameters}.

For more on the relationship between replica overlaps and the Parisi minimizer, we refer the reader to \cite{auffinger-chen15I,auffinger-chen15II,auffinger-chen-zeng20}, or to \cite{mezard-parisi-virasoro87,talagrand11I,talagrand11II,panchenko13} for extended treatment of the subject.

\subsection{Statements of main results} \label{main_results}
Consider the replica symmetric expression for the free energy, which involves a single parameter $q^s \in [0,1]$ for each species $s$.
By this we mean that in \eqref{parameters}, we set $k=0$ and $q_1^s = q^s$.
Using the formula for the moment generating function of the Gaussian distribution, one finds that
\eeq{ \label{RS_formula}
\mathscr{P}_\mathrm{RS}(q) = \log 2 + \sum_{s=1}^M \lambda_s\Big[\EE_1\log\cosh(\beta\eta_1\sqrt{Q_1^s}+h) + \frac{\beta^2}{2}(Q_2^s-Q_1^s)\Big] - \frac{\beta^2}{2}(Q_2-Q_1).
}
By differentiating this expression with respect to each $q^t$, and then applying Gaussian integration to write
\eq{
\EE_1[\eta_1\tanh(\beta\eta_1\sqrt{Q_1^s}+h)] 
&= \beta\sqrt{Q_1^s}\EE_1\sech^2(\beta\eta_1\sqrt{Q_1^s}+h) \\
&= \beta\sqrt{Q_1^s}\big(1-\EE_1\tanh^2(\beta\eta_1\sqrt{Q_1^s}+h)\big),
}
it follows that any critical point $q$ must satisfy
\eeq{ \label{P_deriv}
\frac{\partial\mathscr{P}_\mathrm{RS}}{\partial q^t} = \beta^2\lambda_t\sum_{s=1}^M\Delta_{st}^2\lambda_s\big[q^s-\EE_1\tanh^2(\beta\eta_1\sqrt{Q_1^s}+h)\big] = 0,
\quad t = 1,\dots,M.
}
If $\Delta^2$ is invertible, then this system implies
\eeq{ \label{RS_condition}
q^s = \EE_1\tanh^2(\beta\eta_1\sqrt{Q_1^s}+h), \quad s = 1,\dots,M.
}
Therefore, we define the set
\eq{
\c(\beta,h) \coloneqq \{q \in [0,1]^M : \EE \tanh^2(\beta\eta\sqrt{Q_1^s}+h) = q_s \text{ for each $s = 1,\dots,M$}\}.
}
As for the SK model, it is not difficult to show that for small $\beta$, $\c(\beta,0)$ is a singleton; obtaining a sharp estimate requires more care. 
We attempt to do so in the two-species case as part of Theorem \ref{uniquenessof2speciesrRSsol} below.
We also expect $\c$ is a singleton whenever $h > 0$, 
and can prove such a statement when $M=2$.

To simplify notation and standardize temperature scale, we henceforth assume $M=2$ and
\eeq{ \label{delta_assumptions}
\Delta^2 = \begin{pmatrix}
\Delta_{11}^2 & 1 \\
1 & \Delta_{22}^2,
\end{pmatrix}
\quad
\text{where}
\quad
\Delta_{11}^2\Delta_{22}^2 > 1
\quad
\text{and} 
\quad
\lambda_1\Delta_{11}^2 \geq \lambda_2\Delta_{22}^2,
}
in particular ensuring \eqref{variational}.
The second inequality above is made without loss of generality, simply by relabeling the species if necessary.
With these assumptions, we can now state our first result.

\begin{thm} \label{uniquenessof2speciesrRSsol}
Assume \eqref{delta_assumptions}.
If either $h > 0$ or
\eeq{ \label{uniqueness_condition}
\beta^2 < \frac{1}{\lambda_1\Delta_{11}^2+\lambda_2\Delta_{22}^2 + \sqrt{(\lambda_1\Delta_{11}^2-\lambda_2\Delta_{22}^2)^2+4\lambda_1\lambda_2}},
}
then $\c(\beta,h) = \{q_*\}$ is a singleton.
In this case,
\eeq{ \label{RS_minimizing}
\mathrm{RS}(\beta,h) \coloneqq \min_{q\in[0,1]^2}\mathscr{P}_\mathrm{RS}(q) = \mathscr{P}_\mathrm{RS}(q_*).
}
\end{thm}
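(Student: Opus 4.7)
The plan is to reduce the uniqueness claim to a fixed-point problem: by the derivation in the excerpt and the invertibility of $\Delta^2$ guaranteed by \eqref{delta_assumptions}, critical points of $\mathscr{P}_\mathrm{RS}$ coincide with fixed points of
\[
F\colon [0,1]^2\to [0,1]^2,\qquad F^s(q)\coloneqq \E\tanh^2\bigl(\beta\eta\sqrt{Q_1^s(q)}+h\bigr),\quad s=1,2.
\]
So showing $\c(\beta,h)=\{q_*\}$ amounts to proving $F$ has a unique fixed point, after which \eqref{RS_minimizing} will follow from a boundary analysis of $\mathscr{P}_\mathrm{RS}$.

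For the case \eqref{uniqueness_condition} I will prove $F$ is a strict contraction in the weighted norm $\|q\|_\Lambda^2=\lambda_1(q^1)^2+\lambda_2(q^2)^2$, where $\Lambda=\mathrm{diag}(\lambda_1,\lambda_2)$. The Jacobian factors as $J(q)=2G(q)\Delta^2\Lambda$ with $G(q)=\mathrm{diag}(g'(Q_1^1),g'(Q_1^2))$ and $g(x)=\E\tanh^2(\beta\eta\sqrt x+h)$. Two Gaussian integrations by parts yield
\[
g'(x)=\beta^2\,\E\bigl[\sech^2(Y)\bigl(1-3\tanh^2(Y)\bigr)\bigr],\qquad Y=\beta\eta\sqrt x+h,
\]
and the pointwise estimate $\sech^2(y)|1-3\tanh^2(y)|\le 1$ yields the uniform bound $|g'|\le\beta^2$. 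Conjugating $J$ by $\Lambda^{1/2}$ turns it into $2G\,\Lambda^{1/2}\Delta^2\Lambda^{1/2}$, so $\|J\|_\Lambda\le 2\beta^2\rho(\Lambda^{1/2}\Delta^2\Lambda^{1/2})$; a direct $2\times 2$ eigenvalue computation identifies that spectral radius as $\tfrac12\bigl(\lambda_1\Delta_{11}^2+\lambda_2\Delta_{22}^2+\sqrt{(\lambda_1\Delta_{11}^2-\lambda_2\Delta_{22}^2)^2+4\lambda_1\lambda_2}\bigr)$, so \eqref{uniqueness_condition} is exactly the statement $\|J\|_\Lambda<1$ uniformly on $[0,1]^2$, and Banach's fixed-point theorem delivers uniqueness.

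For $h>0$ the bound $|g'|\le\beta^2$ need not yield a contraction at large $\beta$, so the strategy must change. I would adapt the single-species uniqueness argument built on the Latala--Guerra inequality (as in Talagrand's treatment of the SK model with external field), exploiting that $h>0$ forces every solution to satisfy $q^1,q^2>0$ strictly and so every effective field $Y_s$ to be non-centered. For each fixed $q^2\in[0,1]$, the one-dimensional map $q^1\mapsto F^1(q^1,q^2)=\E\tanh^2(\beta\eta\sqrt{aq^1+b}+h)$, with $a=2\Delta_{11}^2\lambda_1$ and $b=2\lambda_2q^2$, is of SK type with nonzero external field, and the classical single-species uniqueness proof produces a unique $\phi(q^2)$; symmetrically define $\psi(q^1)$. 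Two-species uniqueness then reduces to showing $\phi\circ\psi$ has a unique fixed point on $[0,1]$, which I would establish by quantifying Latala--Guerra into a strict contraction statement for both $\phi$ and $\psi$. This is the chief obstacle: the tidy Jacobian bound that handles \eqref{uniqueness_condition} collapses at high $\beta$, and controlling the composition $\phi\circ\psi$ rather than each factor separately is the delicate point.

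Finally, to deduce \eqref{RS_minimizing}, $\mathscr{P}_\mathrm{RS}$ is continuous on the compact set $[0,1]^2$ and hence attains its minimum, which if interior must be a critical point and is therefore $q_*$ by the uniqueness just proved. Boundary minima are ruled out by applying the KKT first-order optimality conditions on each of the four edges and combining the resulting inequalities with the strict positivity $\Delta_{11}^2\Delta_{22}^2>1$ from \eqref{delta_assumptions}: the assumption forces an explicit contradiction on every edge and corner, except possibly $(0,0)$ when $h=0$, which in that case is itself $q_*$. Hence the minimum of $\mathscr{P}_\mathrm{RS}$ on $[0,1]^2$ equals $\mathscr{P}_\mathrm{RS}(q_*)$.
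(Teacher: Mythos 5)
Your treatment of the case \eqref{uniqueness_condition} is correct and is in fact a slightly cleaner route than the paper's. You bound the Jacobian uniformly via $|g'|\le\beta^2$, conjugate by $\Lambda^{1/2}$ to reduce to the spectral radius of the symmetric matrix $\Lambda^{1/2}\Delta^2\Lambda^{1/2}$, and read off the threshold exactly as the largest eigenvalue; the paper instead writes the difference of two hypothetical fixed points as a path integral, bounds each component separately, and then optimizes over the ratio $t=|p^2-q^2|/|p^1-q^1|$ to extract the same constant. The two computations are equivalent, but your operator-norm framing makes the appearance of the condition \eqref{uniqueness_condition} more transparent.

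The gap is in the $h>0$ case, which is the harder half of the theorem and which your proposal does not actually prove. You acknowledge ``the chief obstacle,'' but the particular fix you suggest---``quantifying Latala--Guerra into a strict contraction statement for both $\phi$ and $\psi$''---will not go through: those implicit maps are not contractions once $\beta$ is large, so no uniform Lipschitz bound of the kind you envisage is available, and Lemma~\ref{LatalaGuerralemma} gives strict monotonicity, not a quantitative Lipschitz constant. Moreover, the derivative $\phi'(q^2)=\dfrac{2\lambda_2\,g'(Q_1^1)}{1-2\Delta_{11}^2\lambda_1\,g'(Q_1^1)}$ does not even have a definite sign in the $q$-variables, because $g'$ changes sign (it equals $\tfrac{\beta^2}{2}\,\EE f''$ with $f''\in[-2/3,2]$), so a na\"ive monotonicity argument on $\phi\circ\psi$ breaks down as well. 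The paper's argument takes a genuinely different tack: it changes variables to $Q=Aq$ and uses the sign pattern of $A^{-1}=\begin{pmatrix}a&-b\\-c&d\end{pmatrix}$ with $a,b,c,d>0$ (this is exactly where $\det A>0$, i.e.\ positive definiteness of $\Delta^2$, enters), rewrites the system as \eqref{2speciesRSeq}, and then plays the strict monotonicity from Lemma~\ref{LatalaGuerralemma} against the strict monotonicity of the linear sides in a two-step ``slide one variable through the other'' argument. That reparametrization is precisely what makes the relevant one-variable functions monotone, and it is the missing ingredient in your sketch. Your boundary analysis for \eqref{RS_minimizing} is likewise only gestured at; the paper's version again leans on the same sign structure, now of $B^{-1}$ in \eqref{B_inverse}, to produce the explicit contradiction on each edge.
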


The proof of Theorem \ref{uniquenessof2speciesrRSsol} is provided in Section \ref{sec:uniqueness}.
Based on analogy with the SK model (see Remark \ref{rmk:analogy} below), one might suspect that \eqref{uniqueness_condition} defines the RS phase of the MSK model.
This suspicion is supported by the striking similarity with \eqref{1RSB_condition}, which at least in the SK model is believed to define the RSB phase (see Section \ref{subsec:related_results}).
Notice that when $h=0$ and $q_s = 0$ for each $s$, the quantity $\gamma_s$ defined in Theorem~\ref{2speciessymmetrybreaking} reduces to $\lambda_s$, and the right-hand sides of \eqref{1RSB_condition} and \eqref{uniqueness_condition} are the same.

\begin{thm}\label{2speciessymmetrybreaking}
Assume \eqref{delta_assumptions} and $h>0$.
Let $q = q_*$ be the critical point from Theorem \ref{uniquenessof2speciesrRSsol}.
Define
\eq{
\gamma_s\coloneqq\lambda_s \EE\, \mathrm{sech}^4\,(\beta\eta\sqrt{Q_1^s}+h), \quad s=1,2.
}
If
\eeq{
\beta^2>\frac{1}{\gamma_1 \Delta_{11}^2 +\gamma_2 \Delta_{22}^2 +\sqrt{(\gamma_1 \Delta_{11}^2 -\gamma_2 \Delta_{22}^2)^2 +4 \gamma_1 \gamma_2}}, \label{1RSB_condition}
}
then
 \eeq{\label{1RSB}
\lim_{N\to\infty} F_N < \mathrm{RS}(\beta,h).
}
\end{thm}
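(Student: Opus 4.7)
The plan is to exhibit a one-step RSB (1RSB) perturbation of the RS critical point $q_*$ that strictly decreases the Parisi functional, thereby witnessing $\inf_{\zeta,q}\mathscr{P}(\zeta,q)<\mathrm{RS}(\beta,h)$. The hypothesis \eqref{delta_assumptions} forces $\Delta^2$ to be positive definite, so the variational formula \eqref{variational} is an equality (Guerra--Toninelli upper bound of \cite{barra-contucci-mingione-tantari15} together with the matching lower bound of \cite{panchenko15I}). Consequently, any admissible $(\zeta,q)$ with $\mathscr{P}(\zeta,q)<\mathrm{RS}(\beta,h)$ yields \eqref{1RSB} directly.

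For parameters $m\in(0,1)$ and $v=(v_1,v_2)\in\R^2$ with $v_s\ge 0$, I would take the $k=1$ ansatz
\[
\zeta_1 \;=\; m, \qquad q_1^s(\epsilon) \;=\; q_*^s - (1-m)\epsilon v_s, \qquad q_2^s(\epsilon) \;=\; q_*^s + m\epsilon v_s, \qquad s=1,2,
\]
and set $\Phi(\epsilon):=\mathscr{P}(\zeta,q(\epsilon))$. For small $\epsilon>0$ the orderings $0\le q_1^s\le q_2^s\le 1$ hold, and at $\epsilon=0$ the $q$'s degenerate back to $q_*$, forcing $\Phi(0)=\mathrm{RS}(\beta,h)$ independently of $m$. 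The coefficients $(1-m,m)$ are chosen so that the 1RSB Parisi measure $m\delta_{q_1^s}+(1-m)\delta_{q_2^s}$ has mean $q_*^s$ to first order in $\epsilon$; together with the RS criticality \eqref{RS_condition}, this feature should force $\Phi'(0)=0$, so the sign of $\Phi(\epsilon)-\mathrm{RS}(\beta,h)$ for small $\epsilon$ is determined by $\Phi''(0)$.

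The technical core is the computation of $\Phi''(0)$. I would expand $X_3^s,X_2^s,X_1^s,X_0^s$ from \eqref{X_def} to second order in $\epsilon$, carefully handling the vanishing of the innermost square root $\sqrt{Q_2^s-Q_1^s}$ at $\epsilon=0$. Gaussian integration by parts converts the $\tanh$-moments appearing at each layer first into $\sech^2$-moments and then, after a second application, into $\sech^4$-moments, producing exactly the quantities $\gamma_s$ in the statement. I expect the final answer to have the structure
\[
\Phi''(0) \;=\; m(1-m)\,\mathcal{Q}(v)
\]
for a symmetric quadratic form $\mathcal{Q}$ independent of $m$. Up to bookkeeping, $\mathcal{Q}$ should reorganize so that $\mathcal{Q}(v)<0$ for some $v$ if and only if the top eigenvalue of the symmetric matrix with entries $\Delta_{st}^2\sqrt{\gamma_s\gamma_t}$ strictly exceeds $1/(2\beta^2)$. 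By the Perron--Frobenius theorem the top eigenvector can be chosen with non-negative entries (so that the perturbation is admissible), and the explicit $2\times 2$ formula for the largest eigenvalue of $\mathrm{diag}(\sqrt{\gamma_s})\,\Delta^2\,\mathrm{diag}(\sqrt{\gamma_s})$ reproduces precisely the threshold \eqref{1RSB_condition}.

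The main obstacle will be the second-order expansion of $\Phi$ at this degenerate 1RSB point. Several putative first-order contributions arise, most visibly from the explicit $-\tfrac{\beta^2}{2}m(Q_2-Q_1)$ penalty and from the $\eta_2$-layer of $X_2^s$; verifying that they combine to vanish via the RS criticality of $q_*$, and then extracting the $\sech^4$-structure from the remaining second-order terms while honoring the vanishing inner variance $\sqrt{Q_2^s-Q_1^s}$, is where care is most needed. Once $\Phi''(0)$ has been computed in closed form, the negativity analysis reduces to a direct $2\times 2$ eigenvalue computation and should not pose further difficulty.
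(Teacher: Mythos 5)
Your strategy is essentially the same as the paper's: perturb the replica-symmetric critical point $q_*$ into a genuine one-step RSB Parisi measure, show the first-order change vanishes by RS criticality, compute the second-order change, and recognize the threshold \eqref{1RSB_condition} as an eigenvalue condition. The paper implements this by fixing $q_1=q_*$, perturbing only $q_2 = p$, and differentiating in $\zeta$ at $\zeta = 1$; you instead fix $\zeta = m$ and perturb both $q_1$ and $q_2$ around $q_*$ in a mean-preserving fashion. Both parametrizations are valid and must lead to the same threshold, so this is a genuine (if minor) variation. You are also correct that the second-order expansion, after two rounds of Gaussian integration by parts, produces the $\sech^4$-moments $\gamma_s$.

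There are, however, two concrete gaps. First, the entire technical heart of the argument --- the second-order expansion of $\mathscr{P}_\mathrm{1RSB}$ at the degenerate point, handling the vanishing increment $\sqrt{Q_2^s-Q_1^s}$, and extracting the explicit matrix structure of $\Phi''(0)$ --- is only described in outline. The paper devotes an appendix (Lemma \ref{V_calculations}) to precisely this, arriving at the Hessian $\beta^2\Lambda(2\beta^2\Delta^2\Gamma\Delta^2-\Delta^2)\Lambda$. Your claimed factorization $\Phi''(0)=m(1-m)\mathcal{Q}(v)$ with $\mathcal{Q}$ independent of $m$ is not verified and requires nontrivial relations among the three Hessian blocks $H_{q_1q_1}$, $H_{q_1q_2}$, $H_{q_2q_2}$ (though for the theorem you only need one favorable $m$, so this is not fatal). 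Incidentally, your mean-preservation of the Parisi measure holds exactly, not just to first order, and $\Phi'(0)=0$ actually follows from the fact that $\nabla_{q_1}\mathscr{P}_\mathrm{1RSB}$ and $\nabla_{q_2}\mathscr{P}_\mathrm{1RSB}$ each vanish separately at $(q_*,q_*,\zeta)$, so mean-preservation is not essential to this step.

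Second, and more substantively, the Perron--Frobenius step as stated does not close the argument. The natural matrix appearing in $\Phi''(0)$ is (a positive-diagonal conjugate of) $K := 2\beta^2\Delta^2\Gamma\Delta^2 - \Delta^2$, whereas you invoke Perron--Frobenius for $G := \Gamma^{1/2}\Delta^2\Gamma^{1/2}$. A nonnegative top eigenvector of $G$ is not, by itself, an admissible perturbation direction; the change of variables involves $(\Delta^2)^{-1}$, which has negative off-diagonal entries, so nonnegativity is not automatically transported. The paper instead does a direct case analysis of the $2\times 2$ entries $u,t,v$ of $K$ in \eqref{2speciescondition}--\eqref{order_claim}, and in particular proves $\beta^2_v < \beta^2_m$, which guarantees $K_{12}>0$ in the regime \eqref{1RSB_condition} and makes a nonnegative witness $x$ available. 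Your Perron--Frobenius idea can in fact be salvaged: if $\tilde v$ is the (positive) Perron eigenvector of $\Delta^2\Gamma$, then $v := \Lambda^{-1}\Gamma\tilde v/\lambda_{\max}$ is nonnegative (it is a positive-diagonal transformation of $\tilde v$), it satisfies $\Delta^2\Lambda v = \tilde v$, and plugging into the quadratic form shows $v^\intercal\Lambda K\Lambda v = \tilde v^\intercal\Gamma\tilde v\,(2\beta^2 - 1/\lambda_{\max}(G))>0$ exactly when $\lambda_{\max}(G) > 1/(2\beta^2)$. This would be a slicker route than the paper's case analysis, but none of this chain of identities appears in your proposal, so as written the admissibility of the perturbation is asserted rather than proved. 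Finally, note that the crucial inequality $\Delta_{11}^2\Delta_{22}^2 > 1$ from \eqref{delta_assumptions} is what makes the paper's $\beta^2_v < \beta^2_m$ (and hence the whole admissibility argument) work; your proposal never invokes the positive-definiteness of $\Delta^2$ in this step, which is a signal that something is being swept under the rug.
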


\begin{remark} \label{rmk:analogy}
If $\Delta_{11}^2=\Delta_{22}^2=1$, then \eqref{uniqueness_condition} recovers the analogous result for the SK model, proven independently in \cite{guerra01} and \cite{latala02}.
Similarly, \eqref{1RSB_condition} recovers the AT condition proven in \cite{toninelli02}.
Notational choices in the SK model, however, sometimes replace $\beta^2$ with $\beta^2/2$.
\end{remark}

The proof of Theorem \ref{2speciessymmetrybreaking} is given in Section \ref{sec:hessian}.
In terms of the overlap order parameter \eqref{overlap_def}, Theorem \ref{2speciessymmetrybreaking} says that if \eqref{1RSB_condition} holds, then $R_s(\sigma^1,\sigma^2)$ has a limiting distribution that is nontrivial (i.e.~at least two points in the support) for some species $s$.
Moreover, our analysis in the two-species case suggests that \eqref{1RSB} is realized at smaller $\beta$ when both species break symmetry as opposed to just one; see \eqref{2speciescondition_new}, which is equivalent to \eqref{1RSB_condition}.
Therefore, it is plausible that under convexity \eqref{delta_assumptions}, symmetry breaking of one species implies symmetry breaking of all species.
Verifying this statement rigorously, however, requires a global analysis that goes beyond the perturbative approach of this paper.

In the context of the classical SK model, the inequality \eqref{1RSB_condition} is obtained in \cite{toninelli02} as the necessary and sufficient condition for a certain second derivative to be positive.
The analogous object in the MSK model is an $M\times M$ Hessian matrix, and the relevant condition is, at least intuitively, the positivity of its top eigenvalue.
A difficulty posed by multi-dimensionality, however, is that the associated eigenvector need not have all positive entries, which is ultimately needed to conclude symmetry breaking because of the ordering in \eqref{parameters_q}.
When $M=2$, we are able to overcome this difficulty and prove the relevant eigenvector \textit{does} have positive coordinates, by direct analysis of matrix entries.

\subsection{Related results for Ising spin glasses} \label{subsec:related_results}

In the single-species case, \eqref{hamiltonian} is often replaced by a more general Hamiltonian that considers interactions not just between pairs of spins, but also between $p$-tuples of spins for any $p \geq 2$.
More precisely, the mixed $p$-spin model with mixture $\xi(t) = \sum_{p\geq2} \beta_p^2t^p$, inverse temperature $\beta > 0$, and external field $h \geq 0$, has the Hamiltonian
\eq{
H_N(\sigma) = \beta\sum_{p=2}^\infty \frac{\beta_p}{N^{(p-1)/2}}\sum_{i_1,\dots,i_p=1}^N g_{i_1\cdots i_p}\sigma_{i_1}\cdots\sigma_{i_p} + h\sum_{i=1}^N\sigma_i, \quad \sigma \in \{\pm 1\}^N,
}
where the disorder variables $g_{i_1\cdots i_p}$ all i.i.d.~standard normals.
For such models (with suitable decay conditions on the $\beta_p$), the Parisi formula has been proved by Talagrand \cite{talagrand06} when $\beta_p = 0$ for all odd $p$, and by Panchenko \cite{panchenko14} in the general case.

In this more general setting, the set of RS critical points is
\eq{
\c(\beta,h) \coloneqq \{q \in [0,1] : \EE \tanh^2(\beta \eta \sqrt{\xi'(q)} + h) = q\},
}
As discussed in \cite{jagannath-tobasco17I}, the size of $\c(\beta,h)$ is very difficult to determine in general.
Nevertheless, we are more generally concerned with the quantity
\eq{
\alpha(\beta,h) \coloneqq \min_{q \in \c(\beta,h)} \beta^2 \xi''(q)\EE \sech^4(\beta \eta \sqrt{\xi'(q)} + h).
}
The result of Toninelli \cite{toninelli02} for the SK model, which Theorem \ref{2speciessymmetrybreaking} generalizes,  can then be written as
\eq{
\alpha(\beta,h) > 1 \quad \implies \quad \lim_{N\to\infty} F_N< \inf_{q \in \c(\beta,h)} \mathscr{P}_\mathrm{RS}(q),
}
where $\mathscr{P}_\mathrm{RS}$ is the Parisi functional restricted to Dirac delta measures;
see \cite{jagannath-tobasco17I} by Jagannath and Tobasco, who extend this result to mixed $p$-spin models.
It is conjectured that the converse is also true, at least when $\beta_2 > 0$, and some partial results are given in \cite{aizenman-lebowitz-ruelle87,guerra-toninelli02II,talagrand02,jagannath-tobasco17I}.
However, for the Ghatak--Sherrington model (in which spin $0$ is allowed), the converse is known to be false by work of Panchenko \cite{panchenko05}.

%
%
%


\subsection{Challenges with three or more species} \label{3plus_difficulties}
As discussed in Section \ref{main_results}, the relationship between \eqref{uniqueness_condition} and \eqref{1RSB_condition} generalizes the one between the analogous thresholds in the classical SK model.
It would be interesting to have a similar result that applies to the $M$-species model for any $M$.
Unfortunately, it is not clear how the techniques used in Sections \ref{sec:uniqueness} and \ref{sec:hessian} could be adapted to handle the case $M\geq3$.

The challenges are mainly linear algebraic.
For instance, if $A$ is an $M\times M$ positive definite matrix, then the signs of the off-diagonal entries of $A^{-1}$ cannot be determined from $\det(A)$, unless $M\leq2$.
This information is crucial in the proof of Theorem~\ref{uniquenessof2speciesrRSsol} (see \eqref{inverse_A} and \eqref{B_inverse}), which establishes the uniqueness of the RS critical point.
Without this uniqueness, Theorem \ref{2speciessymmetrybreaking} can only be stated as Corollary \ref{general_RSB_condition}, which is useful in determining symmetry breaking only if one can identify a critical point with minimal energy.  This task is non-trivial even when $M=1$; for $M\geq 2$, it is not actually clear \textit{a priori} why there should even be only finitely many critical points.

Another difficulty is analyzing the matrix $K = K(\beta)$ appearing in the proof of Theorem \ref{2speciessymmetrybreaking}.
Determining the exact set of $\beta$ for which $K$ has a positive eigenvalue, as well as an eigenvector with all nonnegative entries, requires one to consider all the relationships among the $M(M+1)/2$ distinct entries of $K$ that yield this property.
Any argument that works for general $M$ will not be able to proceed directly as we do, and may have to first address the following question: If $K$ has a positive eigenvalue, is there necessarily an associated eigenvector with all nonnegative entries?

\subsection{Non-convex cases}
The validity of the Parisi variational formula \eqref{variational} is known only when $\Delta^2$ is nonnegative definite.
This formula fails, for instance, in the bipartite model $\Delta^2 = \begin{pmatrix} 0 & 1 \\ 1 & 0 \end{pmatrix}$ with $h=0$.
There are two ways to see this, both starting from the fact \cite[Theorem 3]{barra-genovese-guerra11} that for this model, the following implication holds:
\eq{
\beta^2\leq\frac{1}{\sqrt{4\lambda_1\lambda_2}} \quad \implies \quad \lim_{N\to\infty} F_N = \log 2 + \beta^2\lambda_1\lambda_2 = \mathscr{P}_\mathrm{RS}(0,0).
}
The first approach is to observe as in \cite[Remark 5]{barra-genovese-guerra11} that $(0,0)$ is a saddle point of $\mathscr{P}_\mathrm{RS}$ rather than a minimizer.
Indeed, in the bipartite model, \eqref{P_deriv} becomes
\eq{
\begin{pmatrix} 
\frac{\partial \mathscr{P}_\mathrm{RS}(q^1,q^2)}{\partial q^1}\vspace{3pt} \\ \frac{\partial \mathscr{P}_\mathrm{RS}(q^1,q^2)}{\partial q^2}
\end{pmatrix}
= \beta^2\lambda_1\lambda_2
\begin{pmatrix}
0 & 1 \\
1 & 0
\end{pmatrix}
\begin{pmatrix}
q^1 - \EE\tanh^2(\beta\eta\sqrt{2\lambda_2q^2}) \\ 
q^2 - \EE\tanh^2(\beta\eta\sqrt{2\lambda_1q^1})
\end{pmatrix}.
}
If $q^1 = 0$ and $q^2>0$, then the partial derivative with respect to $q^2$ is negative, which implies $\mathscr{P}_\mathrm{RS}(0,0) > \mathscr{P}_\mathrm{RS}(0,q^2)$.
Therefore, the right-hand side of \eqref{variational} is strictly smaller than the left-hand side, at least when $\beta^2\leq\frac{1}{\sqrt{4\lambda_1\lambda_2}}$.

The second approach is to observe that if $\Delta^2_{ss} = 0$ for some $s$, then existence of the vector $x\in\R^M$ required in Corollary \ref{general_RSB_condition} holds trivially at all temperatures.
Therefore, in the bipartite model, the proof of Corollary \ref{general_RSB_condition} shows
\eq{
\inf_{\zeta,q} \mathscr{P}(\zeta,q) < \mathscr{P}_\mathrm{RS}(0,0). 
}
Once again, the conclusion is that \eqref{variational} is not correct when $\beta^2\leq\frac{1}{\sqrt{4\lambda_1\lambda_2}}$.

\section{Uniqueness of critical point} \label{sec:uniqueness}

In this section we prove Theorem \ref{uniquenessof2speciesrRSsol}, which asserts the uniqueness of the replica symmetric critical point in the parameter regime $\{h>0\} \cup \{\beta < \beta_0\}$, where $\beta_0$ is identified from \eqref{uniqueness_condition}.
For the SK model, the argument to identify $\beta_0$ is straightforward and can be found in \cite[Section 1.3]{talagrand11I}. 
The two-dimensional nature of the problem here is handled by introducing a pair of inequalities and then optimizing over $\beta$.

Addressing the case $h>0$ is also straightforward in the SK model, at least once the clever Lemma \ref{LatalaGuerralemma} is realized.
In fact, we are able to make use of this lemma once more in the two-species case, since the signs of the entries in a $2\times2$ inverse matrix are easy to determine.

\begin{lemma}[{\cite{guerra01} and \cite{latala02}, see also \cite[Appendix A.14]{talagrand11II}}]\label{LatalaGuerralemma}
Let $\phi$ be an odd, twice-differentiable, increasing, and bounded function, that is strictly concave when $y>0$. 
Then the function $\Phi(x)\coloneqq\EE(\phi(z\sqrt{x}+h)^{2})/x$ is strictly decreasing on $\mathbb{R}^{+}$ and vanishes as $x \rightarrow \infty$.
\end{lemma}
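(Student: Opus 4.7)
The plan separates the two assertions of the lemma. The vanishing as $x\to\infty$ is immediate from boundedness of $\phi$: if $|\phi|\leq M$, then $\Phi(x)\leq M^2/x \to 0$.

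For strict monotonicity, set $Y = z\sqrt{x}+h$ and $u(x) = \EE[\phi(Y)^2]$, so that $\Phi'(x) = (xu'(x) - u(x))/x^2$. Two applications of Gaussian integration by parts (via $\EE[z g(z)] = \EE[g'(z)]$ for $z \sim N(0,1)$) yield
\[
u'(x) = \EE[(\phi'(Y))^2 + \phi(Y)\phi''(Y)] \qquad \text{and} \qquad xu'(x) = \EE[(Y-h)\phi(Y)\phi'(Y)].
\]
Thus the claim $\Phi'(x)<0$ reduces to the integrated inequality
\[
\EE[(Y-h)\phi(Y)\phi'(Y)] < \EE[\phi(Y)^2].
\]

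When $h=0$ this follows from a pointwise argument: strict concavity of $\phi$ on $(0,\infty)$ with $\phi(0)=0$ gives $y\phi'(y) < \phi(y)$ for $y>0$, and oddness flips the inequality for $y<0$; multiplying by $\phi(y)$ (whose sign matches $y$) yields $y\phi(y)\phi'(y) < \phi(y)^2$ for all $y\neq 0$, and integrating against the symmetric distribution of $z\sqrt{x}$ closes the case.

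The main obstacle is $h>0$, where the pointwise inequality can fail (for instance, one checks numerically that $(y-h)\phi(y)\phi'(y) > \phi(y)^2$ at $\phi=\tanh$, $h=1$, $y=-1$). The plan is to symmetrize using the reflection $z\mapsto -z$: pairing $Y = h+z\sqrt{x}$ with $\tilde Y = h-z\sqrt{x}$ under the common Gaussian density, the integrated inequality reduces to showing, for each $u>0$ and with $A=\phi(h+u)$, $B=\phi(h-u)$, $A'=\phi'(h+u)$, $B'=\phi'(h-u)$, that
\[
A(A-uA') + B(B+uB') > 0.
\]
For $u\leq h$ one has $B\geq 0$ and both parenthesized factors are nonnegative---the first via the concavity bound $\phi(t) \geq t\phi'(t)$ at $t = h+u$ combined with $u\leq h+u$, the second from positivity of $\phi$ and $\phi'$ on $[0,\infty)$---so the sum is strictly positive. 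The hard subcase is $u>h$, where $B<0$ and the second term can become negative; here the plan is to exploit both the concavity bound $\phi(t) \geq t\phi'(t)$ and the resulting monotonicity (decrease on $(0,\infty)$) of $\phi'$ to show that the strictly positive contribution from $A(A-uA')$ dominates, closing the argument as in \cite{guerra01,latala02,talagrand11II}. Strict concavity of $\phi$ on $(0,\infty)$ is what upgrades the chain of inequalities to the strict form needed for strict monotonicity.
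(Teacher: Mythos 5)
Your reduction of strict monotonicity to the integrated inequality $\EE[(Y-h)\phi(Y)\phi'(Y)] < \EE[\phi(Y)^2]$ is correct (it really uses one Gaussian integration by parts together with the identity $z\sqrt{x}=Y-h$, not two), and both the vanishing-at-infinity claim and the $h=0$ pointwise case are handled correctly. The trouble is the subcase you yourself label as hard, $u>h$: it is only described as a ``plan,'' and the bounds you gesture toward do not close it. With $a=h+u>b=u-h>0$ and $u=\tfrac{a+b}{2}$, the claim becomes $\phi(a)^2+\phi(b)^2>\tfrac{a+b}{2}\bigl(\phi(a)\phi'(a)+\phi(b)\phi'(b)\bigr)$. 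Applying the concavity bound $\phi(t)>t\phi'(t)$ termwise yields $\phi(a)\phi'(a)+\phi(b)\phi'(b)<\phi(a)^2/a+\phi(b)^2/b$, and substituting this in, the remaining inequality is equivalent to $\phi(b)^2/b\le\phi(a)^2/a$; but $t\mapsto\phi(t)^2/t$ is not monotone (it tends to $0$ both as $t\to0$ and as $t\to\infty$), so this cannot be deduced. Likewise $\phi\phi'$ is not monotone on $(0,\infty)$, so the decrease of $\phi'$ alone does not force the positive term $A(A-uA')$ to dominate. As written there is a genuine gap.

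An alternative decomposition does close the argument. Write
\[
\phi(Y)^2-(Y-h)\phi(Y)\phi'(Y)=\bigl[\phi(Y)^2-Y\phi(Y)\phi'(Y)\bigr]+h\,\phi(Y)\phi'(Y).
\]
The bracketed term is exactly the $h=0$ integrand, which you already showed is pointwise strictly positive off $\{Y=0\}$, so its expectation is strictly positive for every $x>0$ regardless of $h$. For the remainder, set $\chi\coloneqq\phi\phi'$; it is odd with $\chi>0$ on $(0,\infty)$, and the density $p$ of $Y=z\sqrt{x}+h$ satisfies $p(y)>p(-y)$ for all $y>0$ when $h>0$, so
\[
\EE[\chi(Y)]=\int_0^\infty\chi(y)\bigl(p(y)-p(-y)\bigr)\,\dd y>0,
\]
whence $h\,\EE[\chi(Y)]\ge 0$ with strict inequality when $h>0$. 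Adding the two pieces gives $\EE[\phi(Y)^2-(Y-h)\phi(Y)\phi'(Y)]>0$ for all $h\ge0$ and $x>0$, which is the required strict monotonicity. Your symmetrization instinct is the right one, but it is much more effective applied only to the residual term $h\,\phi\phi'$ than to the full integrand.
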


\begin{proof}[Proof of Theorem \ref{uniquenessof2speciesrRSsol}]
Since $\mathscr{P}_\mathrm{RS}(q)$ is a continuous function of $q\in[0,1]^2$, it must attain a minimum.
Since $\mathscr{P}_\mathrm{RS}$ is differentiable on $(0,1)^2$, if this minimum is achieved at some $q \in (0,1)^2$, then $q$ must belong to $\c(\beta,h)$.
Therefore, we will first show that if $h>0$, then \eqref{RS_condition} has at most one solution, and that $\mathscr{P}_\mathrm{RS}$ achieves its minimum in $(0,1)^2$.
In order to handle the case $h=0$, we will separately show that if \eqref{uniqueness_condition} holds, then $\c(\beta,h)$ contains at most one point.
In particular, when $h=0$, it is clear that the single element must be $q_*=(0,0)$, in which case the minimum must be achieved at a boundary point, and we will show that $(0,0)$ is the only possibility.
  
First assume $h>0$.
For ease of notation, we will write $Q^s = Q^s_1$ for $s = 1,2$, where $Q^s_1$ is defined in \eqref{Qdef}.
Considering $Q = \begin{pmatrix} Q^1 \\ Q^2 \end{pmatrix}$ and $q = \begin{pmatrix} q^1 \\ q^2\end{pmatrix}$ as vectors, we have
\eeq{\label{matrixrepofQandq}
Q=Aq,\quad\text{where}\quad A=\begin{pmatrix} 
2\lambda_1\Delta_{11}^2 & 2\lambda_2 \\
2\lambda_1 & 2\lambda_2\Delta_{22}^2
\end{pmatrix}.
}
Since $\Delta^2$ is positive definite, we have $\det A > 0$ and hence
\eeq{\label{inverse_A}
A^{-1}=\begin{pmatrix}
\phantom{-}a&-b\\-c&\phantom{-}d
\end{pmatrix},\quad\text{where}\quad a,b,c,d >0.
}
Assuming $q$ satisfies \eqref{RS_condition}, inversion of $A$ in \eqref{matrixrepofQandq} gives
\begin{subequations} \label{2speciesRSeq}
\begin{align}
aQ^1-bQ^2&=\EE\tanh^{2}(\beta z \sqrt{Q^{1}}+h), \label{2speciesRSeq_1} \\
-cQ^1+dQ^2&=\EE\tanh^{2}(\beta z \sqrt{Q^{2}}+h). \label{2speciesRSeq_2}
\end{align}
\end{subequations}
To show that $\c(\beta,h)$ is a singleton, it suffices (by invertibility of $A$) to show that the system \eqref{2speciesRSeq} admits at most one solution $Q$.

Since $h > 0$, it is clear from \eqref{RS_condition} that $q^1,q^2 > 0$ and thus $Q^1,Q^2 > 0$.
Therefore, we can rewrite \eqref{2speciesRSeq} as
\begin{subequations}
\begin{align}
a-b\frac{Q^2}{Q^1}&=\frac{\EE\tanh^{2}(\beta z \sqrt{Q^{1}}+h)}{Q^1}, \label{rewrite_1}\\
-c\frac{Q^1}{Q^2}+d&=\frac{\EE\tanh^{2}(\beta z \sqrt{Q^{2}}+h)}{Q^2}. \label{rewrite_2}
\end{align}
\end{subequations}
Since $b>0$, the left-hand side of \eqref{rewrite_1} is strictly increasing in $Q^1$ (and approaches $a>0$ as $Q^1\to\infty$), whereas the right-hand side is strictly decreasing by Lemma~\ref{LatalaGuerralemma} (and approaches $0$ as $Q^1\to\infty$).
Therefore, for each fixed value of $Q_2$, there is exactly one value of $Q_1$ satisfying \eqref{rewrite_1}.
Since $c>0$, 
we can also define $Q^1$ from $Q^2$ using \eqref{rewrite_2} instead of \eqref{rewrite_1}.
That is, given $x>0$, define $Q^1(x)$ by
\eq{
-c\frac{Q^1(x)}{x}+d&=\frac{\EE\tanh^{2}(\beta z \sqrt{x}+h)}{x}.
}
Since the right-hand side above is strictly decreasing in $x$, it follows that $Q^1(x)/x$ is strictly increasing in $x$.
In particular, $Q^1(x)$ is strictly increasing in $x$.
Therefore, if we replace \eqref{rewrite_1} by
\eeq{
a-b\frac{x}{Q^1(x)}&=\frac{\EE\tanh^{2}(\beta z \sqrt{Q^{1}(x)}+h)}{Q^1(x)} \label{rerewrite_1},
}
then the right-hand side is strictly decreasing in $x$ by Lemma \ref{LatalaGuerralemma}, while the left-hand side is strictly increasing in $x$.
Consequently, there is at most one value of $x$ such that \eqref{rerewrite_1} holds.

We next check that the minimum of the RS expression $\mathscr{P}_\mathrm{RS}$ is not obtained on the boundary unless $h=0$, in which case our argument will show that $(0,0)$ is the only possible minimizer on the boundary.
Recall from \eqref{P_deriv} that
\eq{
\begin{pmatrix}
y^1 \\ y^2
\end{pmatrix}
\coloneqq
\begin{pmatrix} 
\frac{\partial \mathscr{P}_\mathrm{RS}(q)}{\partial q^1}\vspace{3pt} \\ \frac{\partial \mathscr{P}_\mathrm{RS}(q)}{\partial q^2}
\end{pmatrix}
= B\begin{pmatrix}
x^1 \\ x^2
\end{pmatrix},
}
where
\eq{
B = \beta^2\begin{pmatrix}
\lambda_1^2\Delta_{11}^2 & \lambda_1\lambda_2 \\
\lambda_1\lambda_2 & \lambda_2^2\Delta_{22}^2
\end{pmatrix},
\qquad
x^s = q^s - \EE\tanh^2(\beta\eta\sqrt{Q^s}+h).
}
Because of \eqref{delta_assumptions}, it follows that
\eeq{ \label{B_inverse}
\begin{pmatrix}
x^1 \\ x^2
\end{pmatrix}
=
\begin{pmatrix}
\phantom{-}e & -f \\
-f & \phantom{-}g
\end{pmatrix}
\begin{pmatrix}
y^1 \\ y^2
\end{pmatrix},
\quad \text{where} \quad
e,f,g>0.
}
Suppose toward a contradiction that $\mathscr{P}_\mathrm{RS}$ is minimized as $\mathscr{P}_\mathrm{RS}(q^1,0)$ for some $q^1 \in (0,1]$.
We then have $y^1 \leq 0$ and $x^2 < 0$.
It follows that $y^2$ is negative, since $y^2 \geq 0$ would imply
\eq{
0 > x^2 = -fy^1 + gy^2 \geq gy^2 \geq 0.
}
So now $y^2 < 0$, meaning $\mathscr{P}_\mathrm{RS}(q^1,q^2) < \mathscr{P}_\mathrm{RS}(q^1,0)$ for small enough $q^2 > 0$, giving the desired contradiction.
By similar reasoning, $\mathscr{P}_\mathrm{RS}$ cannot be minimized along $\{0\} \times (0,1]$, $[0,1) \times \{1\}$, or $\{1\} \times [0,1)$.
Moreover, the trivial bound $\tanh^2(u)< 1$ implies that $y^1$ and $y^2$ are both positive if $q^1=q^2=1$, eliminating the possibility that $\mathscr{P}_\mathrm{RS}$ is minimized at $(1,1)$.
Finally, if $h>0$, then the other trivial bound $\tanh^2(h) > 0$ implies that  $y^1$ and $y^2$ are both negative if $q^1=q^2=0$, eliminating the possibility that $\mathscr{P}_\mathrm{RS}$ is minimized at $(0,0)$.
This completes the proof in the case $h > 0$.

For the final part of the proof, we assume \eqref{uniqueness_condition} holds (with $h$ possibly equal to $0$).
Let $F_1(q^1,q^2) \coloneqq \psi(Q^1)$ and $F_2(q^1,q^2) \coloneqq \psi(Q^2)$, where
\eq{
\psi(x) = \EE f(\beta\eta\sqrt{x}+h), \quad \eta \sim \n(0,1), \quad \text{and} \quad f(y) = \tanh^2(y).
}
Using Gaussian integration by parts, we find
\eq{
\psi'(x) = \frac{\beta}{2\sqrt{x}} \EE[\eta f'(\beta\eta\sqrt{x}+h)]
= \frac{\beta^2}{2}\EE f''(\beta\eta\sqrt{x}+h).
}
Also observe that
\eq{
f'(y) = 2\frac{\tanh y}{\cosh^2 y}, \qquad
f''(y) = 2\frac{1-2\sinh^2 y}{\cosh^4 y}.
}
One can check that $f''(y) \in \big[-\frac{2}{3},2\big]$ for all $y\in\R$.
Consequently,
\eeq{ \label{derivative_bounds}
\Big|\frac{\partial F_1}{\partial q^1}\Big| &= 2\lambda_1\Delta_{11}^2|\psi'(Q^1)| \leq 2\beta^2\lambda_1\Delta_{11}^2 
\quad
\Big|\frac{\partial F_1}{\partial q^2}\Big| = 2\lambda_2|\psi'(Q^1)| \leq 2\beta^2\lambda_2 \\
\Big|\frac{\partial F_2}{\partial q^1}\Big| &= 2\lambda_1|\psi'(Q^2)| \leq 2\beta^2\lambda_1 
\quad
\Big|\frac{\partial F_2}{\partial q^2}\Big| = 2\lambda_2\Delta_{22}^2|\psi'(Q^2)| \leq 2\beta^2\lambda_2\Delta_{22}^2.
}
Suppose, toward a contradiction, that $(q^1,q^2)$ and $(p^1,p^2)$ are distinct elements of $\c(\beta,h)$.
That is, each is a fixed point of $(F_1,F_2) : [0,1]^1\to[0,1]^2$.
Let $\gamma(t) = ((1-t)q^1+tp^1,(1-t)q^2+tp^2)$, $0\leq t\leq1$, be the line segment connecting these two points.
We must have
\eeq{ \label{path_integral}
\int_{0}^1 \nabla F_s(\gamma(t)) \cdot (p^1-q^1,p^2-q^2)\ dt = p^s-q^s,\quad s=1,2.
}
On the other hand, the bounds in \eqref{derivative_bounds} reveal that
\eq{
&\int_0^1 |\nabla F_1(\gamma(t)) \cdot (p^1-q^1,p^2-q^2)|\ dt \leq 2\beta^2(\lambda_1\Delta_{11}^2|p^1-q^1| + \lambda_2|p^2-q^2|),
}
as well as
\eq{
&\int_0^1 |\nabla F_2(\gamma(t)) \cdot (p^1-q^1,p^2-q^2)|\ dt \leq 2\beta^2(\lambda_1|p^1-q^1| + \lambda_2\Delta_{22}^2|p^2-q^2|).
}
To derive a contradiction to \eqref{path_integral}, it suffices to show that either
\eq{
2\beta^2(\lambda_1\Delta_{11}^2|p^1-q^1| + \lambda_2|p^2-q^2|) < |p^1-q^1|
}
or
\eq{
2\beta^2(\lambda_1|p^1-q^1| + \lambda_2\Delta_{22}^2|p^2-q^2|) < |p^2-q^2|.
}
By scaling, this is equivalent to showing that for any $t\in[0,\infty]$, either
\eeq{ \label{first_curve}
L_1(t) \coloneqq 2\beta^2\Big(\lambda_1\Delta_{11}^2 + \lambda_2t\Big) < 1
}
or
\eq{
L_2(t) \coloneqq 2\beta^2\Big(\lambda_1\frac{1}{t} + \lambda_2\Delta_{22}^2\Big) < 1.
}
Since $L_1'(t) > 0 > L_2'(t)$ with $L_1(t) \to \infty$ as $t\to\infty$ and $L_2(t) \to \infty$ as $t\to0$,
the maximum value of $\min(L_1(t),L_2(t))$ will be achieved at the unique $t > 0$ such that $L_1(t)=L_2(t)$.
For this value of $t$ we have
\eq{
\lambda_1\Delta_{11}^2 + \lambda_2 t &= \lambda_1\frac{1}{t}+\lambda_2\Delta_{22}^2 \\
\implies \quad t &= \frac{-\lambda_1\Delta_{11}^2+\lambda_2\Delta_{22}^2 + \sqrt{(\lambda_1\Delta_{11}^2-\lambda_2\Delta_{22}^2)^2+4\lambda_1\lambda_2}}{2\lambda_2}.
}
Using this value of $t$ in \eqref{first_curve}, we conclude that a contradiction is realized as soon as
\eq{
2\beta^2\frac{\lambda_1\Delta_{11}^2+\lambda_2\Delta_{22}^2 + \sqrt{(\lambda_1\Delta_{11}^2-\lambda_2\Delta_{22}^2)^2+4\lambda_1\lambda_2}}{2} < 1,
}
which is exactly \eqref{uniqueness_condition}.
\end{proof}

\section{Hessian condition for symmetry breaking} \label{sec:hessian}
In this section we prove Theorem \ref{2speciessymmetrybreaking}, which generalizes the de Almeida--Thouless condition for symmetry breaking in the SK model \cite{almeida-thouless78}.
The proof is a perturbative argument following the strategy of \cite{toninelli02}, in which a symmetry breaking parameter is introduced alongside the RS critical point---whose uniqueness was established in Section \ref{sec:uniqueness}---by bringing the latter's atomic weight $\zeta$ just slightly away from $1$.
A nice exposition of the original argument in the single-species case can by found in \cite[Section 13.3]{talagrand11II}.

Assume $h > 0$ and fix an RS critical point $q_*\in\c(\beta,h)$.
For any $p\in[0,1]^M$ such that $p^s\geq q_*^s$ for each $s$, we can define
\eq{
V(p) = \frac{\partial \mathscr{P}_\mathrm{1RSB}(q_*,p,\zeta)}{\partial \zeta}\Big|_{\zeta = 1}.
}
Here $\mathscr{P}_\mathrm{1RSB}$ is the Parisi functional restricted to level-$1$ symmetry breaking.
An explicit expression \eqref{parisi_1level} is computed in Appendix \ref{sec:appendix}.
The quantity $V(p)$ is useful because of the following observations.

\begin{lemma} \label{V_calculations}
Fix any $q = q_*\in\c(\beta,h)$, and recall $Q_1^s$ defined by \eqref{Qdef}.
With the notation
\eq{
\gamma_s = \lambda_s\EE\sech^4(\beta\eta\sqrt{Q_1^s}+h), \quad s = 1,\dots,M,
}
the following equalities hold:
\begin{itemize}
\item[(a)] $V(q_*) = 0$
\item[(b)] $\nabla V(q_*) = 0$
\item[(c)] $HV(q_*) = \beta^{2}\Lambda(2\beta^{2}\Delta^{2}\Gamma\Delta^{2}-\Delta^{2})\Lambda$, where $\Lambda$ and $\Gamma$ are the $M\times M$ diagonal matrices with diagonal entries $(\lambda_s)_{s=1}^M$ and $(\gamma_s)_{s=1}^M$, respectively.
\end{itemize}
\end{lemma}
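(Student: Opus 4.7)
The plan is to compute $V(p)$ in closed form from the $1$RSB Parisi functional, then Taylor expand around $p=q_*$ to second order in $\delta \coloneqq p - q_*$, reading off parts (a)--(c) as the vanishing of the constant and linear parts, followed by the identification of the quadratic form. Differentiating $\mathscr{P}_{1\mathrm{RSB}}(q_*,p,\zeta)$ in $\zeta$ via the standard identity $\partial_\zeta(\zeta^{-1}\log\EE e^{\zeta X}) = -\zeta^{-2}\log\EE e^{\zeta X} + \zeta^{-1}\EE[Xe^{\zeta X}]/\EE e^{\zeta X}$, using the exact computation $\EE_2\cosh(T^s) = \cosh(\wh T^s)\,e^{\beta^2(Q_2^s - Q_1^s)/2}$, and evaluating at $\zeta=1$, yields
\begin{align*}
V(p) = \sum_{s=1}^M \lambda_s\,\EE_1\!\bigg[\frac{\EE_2[\cosh(T^s)\log\cosh(T^s)]}{\EE_2\cosh(T^s)} - \log\cosh \wh T^s\bigg] - \frac{\beta^2}{2}\sum_{s=1}^M\lambda_s(Q_2^s - Q_1^s) - \frac{\beta^2}{2}(Q_2 - Q_1),
\end{align*}
where $\wh T^s \coloneqq h + \beta\eta_1\sqrt{Q_1^s}$ and $T^s \coloneqq \wh T^s + \beta\eta_2\sqrt{Q_2^s - Q_1^s}$. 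Setting $p = q_*$ makes $Q_2^s = Q_1^s$ and $Q_2 = Q_1$, so $T^s = \wh T^s$ and every summand vanishes; this gives (a).

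For (b) and (c), write $c^s \coloneqq Q_2^s - Q_1^s = 2(\Delta^2\Lambda\delta)_s$, which is $O(\delta)$. Expanding both numerator and denominator of the tilted ratio in powers of $\beta\eta_2\sqrt{c^s}$ using the first four derivatives of $\phi(y)\coloneqq\log\cosh y$, and simplifying via $\phi'' = 1 - \tanh^2$ and $\phi'''' = 2\sech^2(3\tanh^2 - 1)$, gives the key expansion
\begin{align*}
\frac{\EE_2[\cosh T^s\log\cosh T^s]}{\EE_2\cosh T^s} - \log\cosh \wh T^s = \frac{\beta^2 c^s}{2}(1 + \tanh^2 \wh T^s) + \frac{\beta^4(c^s)^2}{4}\sech^4 \wh T^s + O((c^s)^3).
\end{align*}
Averaging in $\eta_1$, one uses $\EE_1\tanh^2 \wh T^s = q_*^s$ (the RS critical-point equation for $q_* \in \c(\beta,h)$) and $\EE_1\sech^4 \wh T^s = \gamma_s/\lambda_s$. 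The linear-in-$\delta$ pieces of $V(p)$ then cancel exactly: the $\sum_s\lambda_s\cdot\tfrac{\beta^2 c^s}{2}$ from the expansion cancels $-\sum_s\lambda_s\tfrac{\beta^2}{2}c^s$, while the remaining $\sum_s\lambda_s\tfrac{\beta^2 c^s}{2}q_*^s$ matches the linear part of $\tfrac{\beta^2}{2}(Q_2 - Q_1) = \tfrac{\beta^2}{2}\sum_s\lambda_s\delta^s Q_1^s + O(\delta^2)$ via the symmetry $\Delta_{st}^2 = \Delta_{ts}^2$ and the definition $Q_1^s = 2\sum_t\Delta_{st}^2\lambda_t q_*^t$. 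This proves (b).

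Collecting the surviving quadratic-in-$\delta$ contributions,
\begin{align*}
V(p) = \frac{\beta^4}{4}\sum_s \gamma_s (c^s)^2 - \frac{\beta^2}{2}\sum_{s,t}\Delta_{st}^2\lambda_s\lambda_t\delta^s\delta^t + O(|\delta|^3) = \frac{1}{2}\delta^T\big[\beta^2\Lambda(2\beta^2\Delta^2\Gamma\Delta^2 - \Delta^2)\Lambda\big]\delta + O(|\delta|^3),
\end{align*}
where I used $c^s = 2(\Delta^2\Lambda\delta)_s$ to write $\sum_s\gamma_s(c^s)^2 = 4\delta^T\Lambda\Delta^2\Gamma\Delta^2\Lambda\delta$ and $\sum_{s,t}\Delta_{st}^2\lambda_s\lambda_t\delta^s\delta^t = \delta^T\Lambda\Delta^2\Lambda\delta$. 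Reading off the Hessian yields (c). The main obstacle is the algebraic bookkeeping leading to the $\sech^4 \wh T^s/4$ coefficient at order $(c^s)^2$, which emerges only after a nontrivial cancellation among the $(\beta\eta_2\sqrt{c^s})^0$, $(\beta\eta_2\sqrt{c^s})^2$, and $(\beta\eta_2\sqrt{c^s})^4$ contributions to both numerator and denominator; the rest of the argument is symmetry-driven and proceeds uniformly in $M$.
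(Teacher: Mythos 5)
Your proposal is correct and reaches the same conclusion as the paper, but via a genuinely different organization of the computation. The paper first writes $V(p)$ as in \eqref{pre_deriv}, then computes $\partial_{p^t}V$ and $\partial_{p^{t'}}\partial_{p^t}V$ in closed form at \emph{arbitrary} $p$ by iterating the Gaussian-integration-by-parts identity \eqref{expectation_formula} with $f(x)=\log(\cosh x)\cosh x$ and then $f(x)=\sinh x \tanh x$, finally specializing to $p=q_*$. You instead bypass the intermediate closed-form derivatives and Taylor-expand $V(q_*+\delta)$ directly to second order in $\delta$ (equivalently, in $c^s=Q_2^s-Q_1^s=2(\Delta^2\Lambda\delta)_s$), reducing everything to the single scalar expansion of $\EE_2[\cosh T^s\log\cosh T^s]/\EE_2\cosh T^s - \log\cosh\wh T^s$ in powers of $\beta^2 c^s$. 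That expansion is indeed $\tfrac{\beta^2 c^s}{2}(1+\tanh^2\wh T^s)+\tfrac{\beta^4(c^s)^2}{4}\sech^4\wh T^s+O((c^s)^3)$: writing $f=\cosh\cdot\log\cosh$, the linear coefficient is $(f''-f)/(2\cosh)=\tfrac12(1+\tanh^2)$ and the quadratic coefficient reduces to $(f''''-2f''+f)/(8\cosh)=\tfrac14\sech^4$ (since $f''''-2f''+f=g_1''-g_1=2\sech^3$ with $g_1=f''-f$), matching both the paper's $f''-f=\cosh+\sinh\tanh$ and, implicitly, its $\sinh\tanh$ step at second order. Your cancellations of the linear pieces and identification of the quadratic form $\tfrac12\delta^\intercal\beta^2\Lambda(2\beta^2\Delta^2\Gamma\Delta^2-\Delta^2)\Lambda\,\delta$ are also correct. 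The trade-off is that the paper's route yields clean intermediate formulas for $\nabla V$ and $HV$ valid away from the critical point, which makes the two rounds of integration by parts conceptually parallel, whereas your route front-loads all the work into one fourth-order scalar Taylor expansion and is arguably more mechanical; both require comparable bookkeeping. One minor imprecision in your write-up: the relevant function whose first four derivatives enter is $f=\cosh\cdot\log\cosh$ (the numerator integrand), not $\phi=\log\cosh$ alone, though the derivatives of $\phi$ you quote do appear after dividing through by $\cosh\wh T^s$.
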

The calculations in Lemma \ref{V_calculations} are straightforward generalizations of the (elegant but somewhat tricky) procedure found in \cite[Section 13.3]{talagrand11II}, and thus postponed to Appendix \ref{sec:appendix}.
Most important, part (c) identifies a condition for symmetry breaking once we note the following result.

\begin{cor} \label{general_RSB_condition}
Assume \eqref{variational} and that $\mathrm{RS}(\beta,h) = \mathscr{P}_\mathrm{RS}(q_*)$ for some $q_*\in\c(\beta,h)$.
If there exists a $x\in\R^M$ with all nonnegative entries such that $x^\intercal HV(q_*)x>0$, then
\eeq{
\lim_{N\to\infty} F_N < \mathrm{RS}(\beta,h). \label{RSB_conclusion}
}
\end{cor}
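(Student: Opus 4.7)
The approach is a short perturbative argument. The hypothesis on $HV(q_*)$ combined with Lemma \ref{V_calculations}(a)--(b) will produce a direction along which $V$ is strictly positive, and the definition $V(p) = \partial_\zeta \mathscr{P}_{\mathrm{1RSB}}(q_*, p, \zeta)|_{\zeta=1}$ will then let us descend below $\mathrm{RS}(\beta,h)$ by sliding $\zeta$ slightly below $1$. The first step is to verify that $\mathscr{P}_{\mathrm{1RSB}}(q_*, p, 1) = \mathscr{P}_{\mathrm{RS}}(q_*)$ for any admissible $p$: the 1RSB order parameter $\mu_s = \zeta\,\delta_{q_*^s} + (1-\zeta)\,\delta_{p^s}$ collapses to $\delta_{q_*^s}$ at $\zeta=1$, and a direct inspection of the explicit formula \eqref{parisi_1level} (using $\EE\cosh(a+b\eta) = \cosh(a)\,\e^{b^2/2}$ to absorb the two innermost Gaussians when $\zeta_1 = \zeta_2 = 1$) confirms that all $p$-dependence cancels. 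By the hypothesis of the corollary, this common value equals $\mathrm{RS}(\beta, h)$.

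Next I would use all three parts of Lemma \ref{V_calculations} to Taylor-expand $V$ to second order at $q_*$:
\eq{
V(q_* + \epsilon x) = \tfrac{\epsilon^2}{2}\, x^\intercal HV(q_*)\, x + o(\epsilon^2) \quad \text{as } \epsilon \to 0^+,
}
which is strictly positive for small $\epsilon > 0$ by hypothesis. Nonnegativity of the entries of $x$ ensures that $p \coloneqq q_* + \epsilon x$ satisfies $p^s \geq q_*^s$ for each $s$; since $q_*^s = \EE\tanh^2(\beta\eta\sqrt{Q_1^s}+h) < 1$, shrinking $\epsilon$ further keeps $p \in [0,1]^M$. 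Thus $p$ lies in the domain of $V$ and $V(p) > 0$.

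From the defining equation for $V$, a first-order Taylor expansion in $\zeta$ at $\zeta = 1$, combined with Step~1, gives, for $\zeta < 1$ sufficiently close to $1$,
\eq{
\mathscr{P}_{\mathrm{1RSB}}(q_*, p, \zeta) = \mathrm{RS}(\beta, h) + (\zeta - 1)\, V(p) + o(|\zeta - 1|) < \mathrm{RS}(\beta, h),
}
and such $\zeta$ lies in the admissible range $(0,1)$ for the 1RSB breakpoint. Because $\mathscr{P}_{\mathrm{1RSB}}$ is just $\mathscr{P}$ in \eqref{parisi_expression} restricted to the 1RSB scheme $k=1$, the variational formula \eqref{variational} assumed in the corollary yields
\eq{
\lim_{N\to\infty} F_N = \inf \mathscr{P} \leq \mathscr{P}_{\mathrm{1RSB}}(q_*, p, \zeta) < \mathrm{RS}(\beta, h),
}
which is \eqref{RSB_conclusion}.

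The only substantive bookkeeping is in Step~1: verifying that \eqref{parisi_1level} degenerates cleanly to $\mathscr{P}_{\mathrm{RS}}(q_*)$ at $\zeta = 1$ (independently of $p$), and that this function is differentiable in $\zeta$ at $\zeta = 1$ with derivative $V(p)$. Both facts follow from a direct inspection of the appendix formula and are routine analogues of the single-species computation in \cite[Section 13.3]{talagrand11II}; no new ideas are needed beyond those already present there, and the nonnegativity assumption on $x$ enters the argument only to make $p$ an admissible 1RSB parameter.
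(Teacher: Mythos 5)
Your proof is correct and follows essentially the same route as the paper: you invoke Lemma~\ref{V_calculations}(a)--(b) together with the Hessian hypothesis to produce $p=q_*+\eps x$ with $V(p)>0$, use nonnegativity of $x$ only to keep $p$ in the admissible 1RSB domain, identify $\mathscr{P}_{\mathrm{1RSB}}(q_*,p,1)=\mathscr{P}_{\mathrm{RS}}(q_*)$ as in~\eqref{RS_equivalence}, and then slide $\zeta$ below $1$ and apply the variational formula~\eqref{variational}. The only difference is that you write out the second-order Taylor expansion of $V$ and the admissibility check a bit more explicitly than the paper does, but the argument is identical in substance.
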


\begin{proof}
First note that by \eqref{RS_condition}, we must have $q_*\in[0,1)^M$.
Since $\nabla V(q_*) = 0$ and $x^\intercal HV(q_*)x > 0$, there exists $\eps > 0$ small enough that $V(q_*+\eps x) > V(q_*) = 0$.
That is,
\eq{
\frac{\partial \mathscr{P}_\mathrm{1RSB}(q_*,q_*+\eps x,\zeta)}{\partial \zeta}\Big|_{\zeta=1} > 0,
}
implying there is $\zeta < 1$ such that
\eq{
\mathscr{P}_\mathrm{1RSB}(q_*,q_*+\eps x,\zeta) < \mathscr{P}_\mathrm{1RSB}(q_*,q_*+\eps x,1) \stackrel{\footnotesize{\mbox{\eqref{RS_equivalence}}}}{=} \mathscr{P}_\mathrm{RS}(q_*)
= \mathrm{RS}(\beta,h).
}
Because of \eqref{variational}, \eqref{RSB_conclusion} follows.
\end{proof}

It is apparent from Corollary \ref{general_RSB_condition} that in order to obtain the correct AT condition for multi-species models, one must have good understanding of the set $\c(\beta,h)$.
Thanks to Theorem \ref{uniquenessof2speciesrRSsol}, this has been accomplished in the two-species case for $h>0$, allowing us to proceed with the following argument.

\begin{proof}[Proof of Theorem \ref{2speciessymmetrybreaking}]
We return to the case $M=2$.
We know from Theorem \ref{uniquenessof2speciesrRSsol} that $q_* \in \c(\beta,h)$ is unique, and moreover that the hypothesis of Corollary \ref{general_RSB_condition} holds.
Therefore, it suffices to show that \eqref{1RSB_condition} implies the existence of $x\in\R^2$ with nonnegative entries such that $x^\intercal H x > 0$, where $H = HV(q_*)$.
To simplify the task, we let $K \coloneqq 2\beta^{2}\Delta^{2}\Gamma\Delta^{2}-\Delta^{2}$ and note that
\eq{
x^\intercal K x > 0 \quad \implies \quad (\Lambda^{-1}x)^\intercal H (\Lambda^{-1}x)
= \beta^2 x^\intercal K x > 0.
}
Therefore, we can replace $H$ by $K$, since multiplication by $\Lambda^{-1}$ preserves nonnegativity of coordinates.

More specifically, we have $K=\begin{pmatrix}u &v \\ v &t\end{pmatrix}$, where
\eq{
u&=2\beta^{2}(\gamma_{1}(\Delta_{11}^{2})^2+\gamma_{2})-\Delta_{11}^2\\
t&=2\beta^{2}(\gamma_{1}+\gamma_{2}(\Delta_{22}^{2})^{2})-\Delta_{22}^{2} \\
v&=2\beta^{2}(\gamma_{1}\Delta_{11}^2+\gamma_{2}\Delta_{22}^2)-1.
}
Now, $x^\intercal K x > 0$ for some $x$ with nonnegative entries if and only if at least one of the following three inequalities is true:
\begin{subequations}
\label{2speciescondition}
\begin{align}
u&>0\quad\text{or} \label{2speciescondition_1}\\
t&>0\quad\text{or} \label{2speciescondition_2}\\
u,t &\leq 0 \quad\text{and}\quad \sqrt{ut} <v \label{2speciescondition_3}. 
\end{align}
\end{subequations}
Direct computation shows that \eqref{2speciescondition} is equivalent to
\begin{subequations} \label{2speciescondition_new}
\begin{align}
2\beta^2 &> 2\beta^{2}_{u}\coloneqq\frac{\Delta_{11}^2}{\gamma_{1}(\Delta_{11}^2)^{2}+\gamma_{2}} \quad \text{or} \\
2\beta^2 &>2\beta^{2}_{t}\coloneqq\frac{\Delta_{22}^2}{\gamma_{1}+\gamma_{2}(\Delta_{22}^2)^{2}} \quad \text{or} \\
\min(2\beta^2_u,2\beta^2_t)\geq2\beta^2&>2\beta^{2}_{v}\coloneqq\frac{1}{\gamma_{1}\Delta_{11}^2+\gamma_{2}\Delta_{22}^2} 
\quad \text{and} \quad 2\beta^{2}_{m}<2\beta^{2} <2\beta^{2}_{M},
\end{align}
\end{subequations}
where
\eq{
2\beta^{2}_{m}&=\frac{2}{\gamma_1 \Delta_{11}^2 +\gamma_2 \Delta_{22}^2 +\sqrt{(\gamma_1 \Delta_{11}^2 -\gamma_2 \Delta_{22}^2)^2 +4 \gamma_1 \gamma_2}} \\
2\beta^{2}_{M}&=\frac{2}{\gamma_1 \Delta_{11}^2 +\gamma_2 \Delta_{22}^2 -\sqrt{(\gamma_1 \Delta_{11}^2 -\gamma_2 \Delta_{22}^2)^2 +4 \gamma_1 \gamma_2}}.
}
Note that
\eq{
(\gamma_1\Delta_{11}^2-\gamma_2\Delta_{22}^2)^2+4\gamma_1\gamma_2
&< (\gamma_1\Delta_{11}^2-\gamma_2\Delta_{22}^2)^2+4\gamma_1\gamma_2\Delta_{11}^2\Delta_{22}^2 \\
&= (\gamma_1\Delta_{11}^2+\gamma_2\Delta_{22}^2)^2,
}
which ensures $0<\beta^2_v < \beta^2_m < \beta^2_M$.
We also claim that
\eeq{ \label{order_claim}
\beta^2_m < \min(\beta_u^2,\beta_t^2) \leq \max(\beta_u^2,\beta_t^2) < \beta^2_M.
}
For instance, suppose $\gamma_1\Delta_{11}^2 \leq \gamma_2\Delta_{22}^2$.
Then
\eq{
\gamma_1\Delta_{11}^2(\Delta_{11}^2\Delta_{22}^2-1) &\leq \gamma_2\Delta_{22}^2(\Delta_{11}^2\Delta_{22}^2-1) \\
\implies \quad (\gamma_1(\Delta_{11}^2)^2+\gamma_2)\Delta_{22}^2 &\leq (\gamma_1+\gamma_2(\Delta_{22}^2)^2)\Delta_{11}^2 \\
\implies \quad
2\beta_t^2 = \frac{\Delta_{22}^2}{\gamma_1 + \gamma_2(\Delta_{22}^2)^2} &\leq \frac{\Delta_{11}^2}{\gamma_1(\Delta_{11}^2)^2+\gamma_2} = 2\beta_u^2.
}
To establish the first bound in \eqref{order_claim}, we observe that $\beta_t^2 > \beta_m^2$ if and only if
\eq{
\Delta_{22}^2\Big(\gamma_1\Delta_{11}^2+\gamma_2\Delta_{22}^2+\sqrt{(\gamma_1\Delta_{11}^2-\gamma_2\Delta_{22}^2)^2+4\gamma_1\gamma_2}\, \Big)&> 2\big(\gamma_1+\gamma_2(\Delta_{22}^2)^2\big) \\
\iff \quad \Delta_{22}^2\sqrt{(\gamma_1\Delta_{11}^2-\gamma_2\Delta_{22}^2)^2+4\gamma_1\gamma_2} &> \gamma_1(2-\Delta_{11}^2\Delta_{22}^2)+\gamma_2(\Delta_{22}^2)^2 \\
\iff \quad 0 &> 4\gamma_1^2(1-\Delta_{11}^2\Delta_{22}^2),
}
which is true by \eqref{delta_assumptions}.
To establish the last bound in \eqref{order_claim}, we drop the term $4\gamma_1\gamma_2$ from the denominator of $\beta_M^2$:
\eq{
2\beta_M^2 > \frac{2}{\gamma_1\Delta_{11}^2+\gamma_2\Delta_{22}^2-\sqrt{(\gamma_1\Delta_{11}^2-\gamma_2\Delta_{22}^2)^2}}
= \frac{1}{\gamma_1\Delta_{11}^2} > 2\beta_u^2.
}
We have thus proved \eqref{order_claim} under the assumption $\gamma_1\Delta_{11}^2\leq\gamma_2\Delta_{22}^2$, but the proof is analogous in the reverse case.
Finally, because of \eqref{order_claim}, we see that \eqref{2speciescondition_new} is equivalent to the single condition $\beta^{2}>\beta^{2}_{m}$.
\end{proof}


%
%

\section{Acknowledgments}
We are grateful to Amir Dembo and Andrea Montanari for their advice and encouragement on this project.
We thank Antonio Auffinger, Erwin Bolthausen, and Aukosh Jagannath for their insights and feedback, and the referee for several suggestions to improve the manuscript.

\bibliography{spin_glasses}

\appendix

\section{Proof of Lemma \ref{V_calculations}} \label{sec:appendix}
\begin{proof}[Proof of Lemma \ref{V_calculations}]
Here we consider \eqref{parisi_expression} when $k = 1$, and
\eq{
\zeta_1 = \zeta \in (0,1), \quad
q_1^s = q^s, \quad 
q_2^s = p^s.
}
Recall that with these choices, we have $Q_0 = Q_0^s = 0$ and
\eq{
Q_1^s &= 2\sum_t \Delta_{st}^2\lambda_tq^t &
Q_1 &= \sum_{s,t}\Delta_{st}^2\lambda_s\lambda_tq^sq^t \\
Q_2^s &= 2\sum_{t}\Delta_{st}^2\lambda_tp^t &
Q_2 &= \sum_{s,t}\Delta_{st}^2\lambda_s\lambda_tp^sp^t \\
Q_3^s &= 2\sum_{t}\Delta_{st}^2\lambda_t &
Q_3 &= \sum_{s,t}\Delta_{st}^2\lambda_s\lambda_t.
}
We have
\eq{
\mathscr{P}_\mathrm{1RSB}(q,p,\zeta) &\coloneqq \log 2 + \sum_s \lambda_s X_0^s - \frac{\beta^2}{2}\sum_{\ell=1}^2 \zeta_\ell(Q_{\ell+1}-Q_{\ell}) \\
&= \log 2 + \sum_s \lambda_s X_0^s - \frac{\beta^2}{2}(Q_3 - Q_2+\zeta(Q_2-Q_1)),
}
where
\eq{
X_3^s &= 
\log \cosh(\beta\eta_3\sqrt{Q_3^s-Q_2^s} + \beta\eta_2\sqrt{Q_2^s-Q_1^s}+\beta\eta_1\sqrt{Q_1^s}+h) \\
\implies X_2^s &= \log \EE_3 \cosh(\beta\eta_3\sqrt{Q_3^s-Q_2^s} + \beta\eta_2\sqrt{Q_2^s-Q_1^s}+\beta\eta_1\sqrt{Q_1^s}+h) \\
&= \frac{\beta^2}{2}(Q_3^s-Q_2^s) + \log\cosh(\beta\eta_2\sqrt{Q_2^s-Q_1^s}+\beta\eta_1\sqrt{Q_1^s}+h) \\
\implies X_1^s &= \frac{1}{\zeta}\log\EE_2 \exp(\zeta_2 X_2^s) \\
&= \frac{\beta^2}{2}(Q_3^s-Q_2^s) + \frac{1}{\zeta}\log \EE_2 \cosh^\zeta(\beta\eta_2\sqrt{Q_2^s-Q_1^s} + \beta\eta_1\sqrt{Q_1^s}+h) \\
\implies X_0^s &= \EE X_1^s = \frac{\beta^2}{2}(Q_3^s-Q_2^s) + \frac{1}{\zeta} \EE_1\log\EE_2 \cosh^\zeta(\beta\eta_2\sqrt{Q_2^s-Q_1^s} + \beta\eta_1\sqrt{Q_1^s}+h).
}
In simplifying $X_2^s$, we have used the fact that for $\eta \sim \n(0,1)$ and $\sigma>0$,
\eeq{ \label{gaussian_calculation}
\EE \cosh(\sigma \eta +h) &= \frac{1}{2}\EE(\e^{\sigma \eta+h} + \e^{-\sigma \eta-h}) \\
&= \frac{1}{2}(\e^{\sigma^2/2 + h} + \e^{\sigma^2/2 - h}) = \e^{\sigma^2/2}\cosh(h).
}
In summary,
\eeq{ \label{parisi_1level}
\mathscr{P}_\mathrm{1RSB}(q,p,\zeta) &= \log 2+ \sum_s \lambda_s \frac{1}{\zeta} \EE_1\log\EE_2 \cosh^\zeta(\beta\eta_2\sqrt{Q_2^s-Q_1^s} + \beta\eta_1\sqrt{Q_1^s}+h) \\ 
&\phantom{=} +\frac{\beta^2}{2}\sum_s \lambda_s (Q_3^s - Q_2^s) - \frac{\beta^2}{2}(Q_3-Q_2+\zeta(Q_2-Q_1)).
}
Notice that when $\zeta = 1$, we recover the replica symmetric expression \eqref{RS_formula}:
\eeq{ \label{RS_equivalence}
\mathscr{P}_\mathrm{1RSB}(q,p,1) &= \log 2 + \sum_s\lambda_s \EE_1\log\EE_2\cosh(\beta\eta_2\sqrt{Q_2^s-Q_1^s} + \beta\eta_1\sqrt{Q_1^s}+h) \\
&\phantom{=}+ \frac{\beta^2}{2} \sum_s \lambda_s(Q_3^s - Q_2^s)- \frac{\beta^2}{2}(Q_3-Q_2+Q_2-Q_1) \\
&= \log 2 + \sum_s \lambda_s \EE_1\bigg[\frac{\beta^2}{2}(Q_2^s-Q_1^s) + \log \cosh(\beta\eta_1\sqrt{Q_1^s}+h)\bigg] \\
&\phantom{=}+ \frac{\beta^2}{2} \sum_s \lambda_s(Q_3^s - Q_2^s) - \frac{\beta^2}{2}(Q_3-Q_1) \\
&= \log 2 + \sum_s \lambda_s \EE_1\log \cosh(\beta\eta_1\sqrt{Q_1^s}+h) \\
&\phantom{=}+ \frac{\beta^2}{2}\sum_s \lambda_s(Q_3^s-Q_1^s) - \frac{\beta^2}{2}(Q_3-Q_1) = \mathscr{P}_\mathrm{RS}(q).
}
Henceforth fix an RS critical point $q = q_*\in\c(\beta,h)$.
For ease of notation, let us write 
\eq{
Y_1^s &\coloneqq \beta\eta_1\sqrt{Q_1^s}+h, \qquad Y_2^s \coloneqq \beta\eta_2\sqrt{Q_2^s-Q_1^s} +Y_1^s,
}
so that
\eq{
\mathscr{P}_\mathrm{1RSB}(q,p,\zeta) &= \log 2+ \sum_s \lambda_s \frac{1}{\zeta} \EE_1\log\EE_2 \cosh^\zeta Y_2^s +\frac{\beta^2}{2}\sum_s \lambda_s (Q_3^s - Q_2^s) \\
&\phantom{=}- \frac{\beta^2}{2}(Q_3-Q_2+\zeta(Q_2-Q_1)).
}
We can then calculate
\eq{
\frac{\partial \mathscr{P}_\mathrm{1RSB}(q_*,p,\zeta)}{\partial \zeta}
&= \sum_s \lambda_s\bigg[\frac{-\EE_1\log\EE_2 \cosh^\zeta Y_2^s}{\zeta^2} +\EE_1\bigg(\frac{\EE_2\log(\cosh Y_2^s)\cosh^\zeta Y_2^s}{\zeta\EE_2 \cosh^\zeta Y_2^s}\bigg)\bigg] \\
&\phantom{=}- \frac{\beta^2}{2}(Q_2-Q_1),
}
which gives
\eeq{ \label{pre_deriv}
V(p) &= \sum_s\lambda_s\bigg[-\EE_1\log\EE_2 \cosh Y_2^s + \EE_1\bigg(\frac{\EE_2\log(\cosh Y_2^s)\cosh Y_2^s}{\EE_2\cosh Y_2^s}\bigg)\bigg] \\
&\phantom{=} - \frac{\beta^2}{2}(Q_2-Q_1).
}
When $p = q_*$, we have $Y_2^s = Y_1^s$ and $Q_2 = Q_1$.
In particular, $Y_2^s$ has no dependence on $\eta_2$, and so the above expression reduces to $V(q_*) = 0$.
This proves claim (a).

The next step is to take partial derivatives with respect to the $p^t$.
First, we use \eqref{gaussian_calculation} to make the simple calculation
\eeq{ 
\EE_2 \cosh Y_2^s = \exp\Big(\frac{\beta^2}{2}(Q_2^s-Q_1^s)\Big)\cosh Y_1^s. \label{Y_switch}
}
Since $Y_1^s$ has no dependence on $p$, and
\eeq{ \label{Q_deriv}
\frac{\partial}{\partial p^t}(Q_2^s-Q_1^s) = \frac{\partial}{\partial p^t}Q_2^s = 2\Delta_{st}^2\lambda_t,
}
we find
\eeq{ \label{deriv_1}
\frac{\partial}{\partial p^t} \EE_1\log\EE_2\cosh Y_2^s 
=\frac{\partial}{\partial p^t}\Big[\frac{\beta^2}{2}(Q_2^s-Q_1^s)+\EE_1\log\cosh Y_1^s\Big]
= \beta^2\Delta_{st}^2\lambda_t.
}
Next we observe that for any twice differentiable function whose derivatives have at most exponential growth at infinity, \eqref{Q_deriv} and Gaussian integration by parts together give
\eeq{ \label{f_deriv}
\frac{\partial}{\partial p^t} \EE_2 f(Y_2^s) 
=  \beta\Delta_{st}^2\lambda_t\EE_2\bigg(f'(Y_2^s)\frac{\eta_2}{\sqrt{Q_2^s-Q_1^s}}\bigg)
&= \beta^2\Delta_{st}^2\lambda_t\EE_2 f''(Y_2^s).
}
Hence
\eeq{ \label{expectation_formula}
&\frac{\partial}{\partial p^t} \EE_1\bigg(\frac{\EE_2f(Y_2^s)}{\EE_2\cosh Y_2^s}\bigg)
\stackrel{\mbox{\footnotesize\eqref{Y_switch}}}{=} \frac{\partial}{\partial p^t}\bigg[ \exp\Big(-\frac{\beta^2}{2}(Q_2^s-Q_1^s)\Big)\EE_1\bigg(\frac{\EE_2 f(Y_2^s)}{\cosh Y_1^s}\bigg)\bigg] \\
&\stackrel{\mbox{\footnotesize\eqref{Q_deriv},\eqref{f_deriv}}}{=} \beta^2\Delta_{st}^2\lambda_t\exp\Big(-\frac{\beta^2}{2}(Q_2^s-Q_1^s)\Big)\bigg[- \EE_1\bigg(\frac{\EE_2 f(Y_2^s)}{\cosh Y_1^s}\bigg) + \EE_1\bigg(\frac{\EE_2f''(Y_2^s)}{\cosh Y_1^s}\bigg)\bigg] \\
&\stackrel{\phantom{\mbox{\footnotesize\eqref{Q_deriv},\eqref{f_deriv}}}}{=} \beta^2\Delta_{st}^2\lambda_t\exp\Big(-\frac{\beta^2}{2}(Q_2^s-Q_1^s)\Big)\EE_1\bigg(\frac{\EE_2[f''(Y_2^s)-f(Y_2^s)]}{\cosh Y_1^s}\bigg) \\
&\stackrel{\hspace{2.5ex}\mbox{\footnotesize\eqref{Y_switch}}\hspace{2.5ex}}{=} \beta^2\Delta_{st}^2\lambda_t\EE_1\bigg(\frac{\EE_2[f''(Y_2^s)-f(Y_2^s)]}{\EE_2\cosh Y_2^s}\bigg).
}
We now apply \eqref{expectation_formula} to $f(x) = \log(\cosh x)\cosh x$, for which
\eq{
f''(x)-f(x) &= \cosh x + \sinh x \tanh x,
}
to obtain
\eeq{ \label{deriv_2}
\frac{\partial}{\partial p^t}\EE_1\bigg(\frac{\EE_2 \log(\cosh Y_2^s) \cosh Y_2^s}{\EE_2\cosh Y_2^s}\bigg)\bigg] 
= \beta^2\Delta_{st}^2\lambda_t\bigg[1 + \EE_1\bigg(\frac{\EE_2 \sinh Y_2^s \tanh Y_2^s}{\EE_2 \cosh Y_2^s}\bigg)\bigg].
}
Finally, we have
\eeq{ \label{deriv_3}
\frac{\partial}{\partial p^t}(Q_2 - Q_1) = \frac{\partial}{\partial p^t}Q_2 = 2\sum_s\Delta_{st}^2\lambda_s\lambda_t p^s. 
}
Using \eqref{deriv_1}, \eqref{deriv_2}, and \eqref{deriv_3} in \eqref{pre_deriv}, we arrive at
\eq{
\frac{\partial}{\partial p^t}V(p) = \beta^2\lambda_t\sum_s \Delta_{st}^2\lambda_s\bigg[\EE_1\bigg(\frac{\EE_2 \sinh Y_2^s \tanh Y_2^s}{\EE_2 \cosh Y_2^s}\bigg) - p^s\bigg].
}
Once more, if $p = q_*$, then $Y_2^s = Y_1^s$ has no dependence on $\eta_2$, in which case
\eq{
\EE_1\bigg(\frac{\EE_2 \sinh Y_2^s \tanh Y_2^s}{\cosh Y_1^s}\bigg) - p^s
= \EE_1(\tanh^2 Y_1^s) - q_*^s = q_*^s - q_*^s = 0 \quad \text{for all $s$.}
}
Consequently, claim (b) holds: $\nabla V(q_* ) = 0$.

Our final step is to compute the Hessian of $V$.
We have
\eq{
\frac{\partial^2}{\partial p^{t'}\partial p^t}V(p) = \beta^2\lambda_t\sum_{s}\Delta^2_{st}\lambda_s\bigg[\frac{\partial}{\partial p^{t'}} \EE_1\bigg(\frac{\EE_2\sinh Y_2^s \tanh Y_2^s}{\EE_2\cosh Y_2^s}\bigg) - \delta_{st'}\bigg],
}
where $\delta_{st'} = 1$ if $s = t'$ and $0$ zero otherwise.
To determine the derivative of the expectation, we apply \eqref{expectation_formula} with $f(x) = \sinh x \tanh x$, for which 
\eq{
f''(x)-f(x) &= 2\mathrm{sech}^3\,x.
}
After doing so, we arrive at
\eq{
\frac{\partial^2}{\partial p^{t'}\partial p^{t}}V(p) &= \beta^2\lambda_t\sum_s \Delta^2_{st}\lambda_s\bigg[2\beta^2\Delta_{st'}^2\lambda_{t'}\EE_1\bigg(\frac{\EE_2\sech^3 Y_2^s}{\EE_2\cosh Y_2^s}\bigg) - \delta_{st'}\bigg].
\intertext{As before, the expression simplifies when $p=q^*$, since then $Y_2^s=Y_1^s$ has no dependence on $\eta_2$.
Namely,}
\frac{\partial^2}{\partial p^{t'}\partial p^{t}}V(q_*)
&= \beta^2\lambda_t\sum_{s}\Delta_{st}^2\lambda_s\big[2\beta^2\Delta_{st'}^2\lambda_{t'}\EE_1\, \mathrm{sech}^4\, Y_1^s - \delta_{st'}\big] \\
&= \bigg(2\beta^4\lambda_t\lambda_{t'}\sum_s \Delta_{st}^2\Delta_{st'}^2\lambda_s \EE_1\, \mathrm{sech}^4\, Y_1^s\bigg) - \beta^2\lambda_t\lambda_{t'}\Delta_{tt'}^2.
}
Rewriting the expression in terms of matrices yields claim (c).
\end{proof}

\end{document}